\def\R{\mathbb{R}}
\def\E{\mathbb{E}}
\def\P{\mathbb{P}}
\def\D{{\mathcal D}}
\newenvironment{theorem}[1]{\endgraf  {\bf #1}\em}{ \endgraf  }
\newenvironment{proof}[1][] {\noindent {\bf Proof#1:} }{\hspace*{\fill}$\square$\medskip\par}
\begin{document}

\title{An Inequality Related to Bifractional Brownian Motion}

\author{Mikhail Lifshits \and Ilya Tyurin}

\date{}

\maketitle

\noindent {\it St.Petersburg State University, Dept Math.Mech., 198504 Stary Peterhof,
Bibliotechnaya pl. 2, Russia} 

\noindent Email: {\tt lifts@mail.rcom.ru}
\smallskip

\noindent {\it Moscow State University, Dept Mech.Math., 119991 Moscow,
Leninskie gory 1, Russia} 

\noindent Email: {\tt itiurin@gmail.com}
\bigskip

AMS 2000 Subject Classification: 60E15, 60G22
\medskip

\bigskip
Keywords: bifractional Brownian motion, moment inequalities, Bernstein functions.
\bigskip

\bigskip

\begin{abstract}
We prove that for any pair of i.i.d. random variables $X, Y$ with finite moment of order
$\alpha \in (0,2]$ it is true that
$$
 \E |X-Y|^\alpha \leq \E |X+Y|^\alpha.
$$
Surprisingly, this inequality turns out to be related with bifractional Brownian motion.
We extend this result to Bernstein functions and provide some counter-examples. 

\end{abstract}

\vskip 1 cm

\section{Introduction}
Let $X,Y$ be i.i.d. random variables with finite expectations. Then it is true that
\begin{equation} \label{e1}
  \E |X-Y| \leq \E |X+Y|.
\end{equation}
Inequality (\ref{e1}) appeared recently in the analytic context (properties of integrable functions)
\cite{XXX}. Since
(\ref{e1}) is a nice fact itself and it seems not to be well known in the probabilistic world, 
it is desirable to search
for adequate proofs and consider eventual extensions for it. In particular, is it true that
\begin{equation} \label{e2}
  \E |X-Y|^\alpha \leq \E |X+Y|^\alpha,
\end{equation}
provided $X$ and $Y$ are i.i.d. and $\E |X|^\alpha<\infty$?

Proving  (\ref{e1}) is a non-trivial exercise for a Probability course. In particular,
(\ref{e1}) follows from the identity
\[
   \E |X+Y| - \E |X-Y| =2 \int_0^\infty \left[\P(X>r)-\P(X<-r) \right]^2 dr.
\]
However, we are not aware of a similar elementary approach to (\ref{e2}).
In this note, we show how (\ref{e2}) suddenly emerges from some recent
advances in the theory of random processes.

\section{Main result}

We prove the following result.
\begin{theorem}{Theorem 1.} Let $\alpha\in (0,2]$.
For any pair of i.i.d. random variables $X, Y$ such that $\E|X|^\alpha<\infty$
it is true that
$$
  \E |X-Y|^\alpha \leq \E |X+Y|^\alpha.
$$
\end{theorem}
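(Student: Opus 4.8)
The plan is to reduce everything to a single pointwise inequality between characteristic functions. Let $\phi(u)=\E e^{iuX}$ be the common characteristic function of $X$ and $Y$. By independence, the characteristic function of $X-Y$ is $\phi(u)\overline{\phi(u)}=|\phi(u)|^2$, while that of $X+Y$ is $\phi(u)^2$. The elementary identity
\[
  |\phi(u)|^2-\mathrm{Re}\,\phi(u)^2 = 2\,(\mathrm{Im}\,\phi(u))^2\ \ge\ 0
\]
is what will ultimately drive the result; it is, morally, the $\alpha$-analogue of the square appearing in the integral identity quoted in the Introduction.

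To turn this into a statement about $\alpha$-th moments I would use the classical integral representation: for $\alpha\in(0,2)$ there is a constant $c_\alpha>0$ with
\[
  |x|^\alpha = c_\alpha\int_0^\infty \frac{1-\cos(ux)}{u^{1+\alpha}}\,du,\qquad x\in\R .
\]
Since the integrand is nonnegative, Tonelli's theorem allows one to integrate in $x$ against the law of any $Z$ with $\E|Z|^\alpha<\infty$, yielding $\E|Z|^\alpha = c_\alpha\int_0^\infty u^{-1-\alpha}\bigl(1-\mathrm{Re}\,\phi_Z(u)\bigr)\,du$, where $\phi_Z$ is the characteristic function of $Z$. The moment hypothesis, together with $|X\pm Y|^\alpha\le 2^\alpha(|X|^\alpha+|Y|^\alpha)$, guarantees that $X-Y$ and $X+Y$ both have a finite $\alpha$-th moment, so all the integrals in sight converge.

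Applying this to $Z=X+Y$ and to $Z=X-Y$ and subtracting, the constant terms cancel and I am left with
\[
  \E|X+Y|^\alpha-\E|X-Y|^\alpha
  = c_\alpha\int_0^\infty \frac{|\phi(u)|^2-\mathrm{Re}\,\phi(u)^2}{u^{1+\alpha}}\,du
  = 2c_\alpha\int_0^\infty \frac{(\mathrm{Im}\,\phi(u))^2}{u^{1+\alpha}}\,du\ \ge\ 0 ,
\]
which is exactly the asserted inequality. The endpoint $\alpha=2$ is not covered by the representation above, but there the claim is immediate, since independence gives $\E|X\pm Y|^2 = 2\E X^2\pm 2(\E X)^2$.

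I do not foresee a genuine obstacle on this route; the only points requiring care are the justification of the Fubini/Tonelli exchange and the convergence of the integral near $u=0$, both handled by the moment assumption and the nonnegativity of $1-\cos$. An alternative, which is the natural one if one wants the promised extension to Bernstein functions, is to write $|x|^\alpha=\psi(x^2)$ with $\psi(t)=t^{\alpha/2}$ a Bernstein function, use the representation $\psi(t)=\int_{(0,\infty)}(1-e^{-\lambda t})\,\mu(d\lambda)$, and reduce to the pointwise inequality $\E e^{-\lambda(X-Y)^2}\ge \E e^{-\lambda(X+Y)^2}$; the latter follows by writing $e^{-\lambda z^2}$ as a Gaussian characteristic function and invoking the same inequality $|\phi|^2\ge\mathrm{Re}\,\phi^2$. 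This second formulation is presumably where the link with bifractional Brownian motion enters.
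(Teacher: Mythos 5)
Your argument is correct, but it follows a genuinely different route from the paper's. The paper recognizes $\frac{1}{2^\alpha}\left((|u|+|v|)^\alpha-||u|-|v||^\alpha\right)$ as the covariance $R^{1/2,\alpha}(|u|,|v|)$ of a bifractional Brownian motion with $H=1/2$, $K=\alpha$, imports its existence for $\alpha\in(0,2]$ from Bardina and Es-Sebaiy, and then writes $\E|X+Y|^\alpha-\E|X-Y|^\alpha=2^\alpha\,\mathrm{Var}\bigl(\int B^{1/2,\alpha}_{|u|}\,\mathrm{sgn}(u)\,P(du)\bigr)\ge 0$; the whole burden of positive definiteness is thus carried by a cited result on Gaussian processes. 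You instead prove that positivity by hand: the representation $|x|^\alpha=c_\alpha\int_0^\infty u^{-1-\alpha}(1-\cos(ux))\,du$ plus the identity $|\phi|^2-\mathrm{Re}\,\phi^2=2(\mathrm{Im}\,\phi)^2$ give the explicit sum-of-squares formula
\[
  \E|X+Y|^\alpha-\E|X-Y|^\alpha \;=\; 2c_\alpha\int_0^\infty \frac{(\mathrm{Im}\,\phi(u))^2}{u^{1+\alpha}}\,du ,
\]
which is precisely the ``elementary approach to (2)'' analogous to the squared-integral identity for $\alpha=1$ that the Introduction says the authors did not find. Your version is self-contained and quantitatively more informative; the paper's is shorter modulo the citation and is designed to exhibit the (surprising) link with bifractional Brownian motion, which is the point of the note. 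The bookkeeping you flag is indeed all that needs care: Tonelli applies because $1-\cos\ge 0$, the hypothesis $\E|X|^\alpha<\infty$ makes both integral representations finite so they may be subtracted, and the endpoint $\alpha=2$ is handled directly as you say. Your closing alternative via $G(\lambda)=\lambda^{\alpha/2}$ and the pointwise inequality $\E e^{-\lambda(X-Y)^2}\ge \E e^{-\lambda(X+Y)^2}$ is essentially the paper's Theorem 2, proved there by expanding $\sinh(2txy)$ into a series of squares rather than by Gaussian randomization of the characteristic function; both work.
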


\begin{proof}
For the proof we will need the definition of the bifractional
Brownian motion introduced by Houdr\'e and Villa in
\cite{HV2003}. It is a centered Gaussian process $B^{H,K} =
\left(B^{H,K}_t, t\geq 0\right)$ with covariance function
$$
R^{H,K}(t,s) = \E\left(B^{H,K}_t B^{H,K}_s\right) =
\frac{1}{2^K}\left((t^{2H}+s^{2H})^K - |t-s|^{2HK}\right), \enskip
t,s\ge 0.
$$
Note that letting $K=1$ yields a usual fractional Brownian motion
$B^H$. Originally, the process was defined for the parameters
$H\in(0,1]$ and ${K\in(0,1]}$.
Bardina and Es-Sebaiy \cite{BE2010} proved recently that  $B^{H,K}$ exists
provided that $(H,K)\in \D$, where
\[
\D:= \{H,K: 0<H\le 1, 0<K\le 2, HK \le 1\}.
\]
(Actually a hint for this extension is contained in a preceding work
by Lei and Nualart \cite{LN2009} who established an integral representation
relating $B^{H,K}$ with fractional Brownian motion $B^{HK}$).

Now we relate our problem to a particular bifractional Brownian
motion. Note that for any $u, v \in \mathbb R$ and
$\alpha\in(0,2]$ one has
$$|u+v|^\alpha - |u-v|^\alpha = \begin{cases}(|u|+|v|)^\alpha -
||u|-|v||^\alpha, & uv\ge 0
\\ ||u|-|v||^\alpha-(|u|+|v|)^\alpha, &
uv < 0\end{cases},$$ which can be briefly expressed as
$$
\left((|u|+|v|)^\alpha - ||u|-|v||^\alpha\right)sgn(u)sgn(v) = 2^\alpha
R^{{1}/{2},\alpha}(|u|,|v|)sgn(u)sgn(v).
$$
Let $\left(B^{1/2,\alpha}_t, t\ge 0\right)$ be a bifractional
Brownian motion with ${H=1/2}$ and $K=\alpha$. Set
$$
Z_\alpha :=
\int^{+\infty}_{-\infty}B^{1/2,\alpha}_{|u|}sgn(u)P(du),
$$
where $P$ stands for the common distribution of $X$ and $Y$. Then
\begin{equation*}
\begin{split}
0 &\le Var(Z_\alpha) =
\int^{+\infty}_{-\infty}\int^{+\infty}_{-\infty} R^{1/2,\alpha}(|u|,|v|) sgn(u) sgn(v) P(du)P(dv)
\\ &= \int^{+\infty}_{-\infty}\int^{+\infty}_{-\infty} \frac{1}{2^\alpha}(|u+v|^\alpha -
|u-v|^\alpha)P(du)P(dv)
\\ &= \frac{1}{2^\alpha} [\E |X+Y|^\alpha - \E |X-Y|^\alpha].
\end{split}
\end{equation*}
\end{proof}

\section{Further extensions and counterexamples}

Inequality (\ref{e2}) trivially extends to the case $\alpha=\infty$ in the following sense.
Let
\begin{eqnarray*}
  M &=& \sup\{r: \P(X<r)<1 \};  \\
  m &=&  \sup\{r: \P(X<r)=0 \}.
\end{eqnarray*}
Then
\[
   ||X-Y||_\infty = M-m \le 2 \max\{|M|; |m| \} = ||X+Y||_\infty.
\]
\medskip

However, without further assumptions (\ref{e2}) does not hold in general for $2<\alpha<\infty$.
To see this,  let us fix $\alpha\in(2,\infty), c>0$. For any $M\ge c$ let $q=\frac cM, p=1-q$.
Let $X_M,Y_M$ be i.i.d. random variables such that
\begin{eqnarray*}
  \P(X_M=1) &=& \P(Y_M=1)=p; \\
  \P(X_M=-M) &=&\P(Y_M=-M)=q.
\end{eqnarray*}
If $M\ge 1$, then
\begin{eqnarray*}
&& \E |X_M-Y_M|^\alpha - \E |X_M+Y_M|^\alpha   
\\
&=& 2pq\left[ (M+1)^\alpha-(M-1)^\alpha \right]
-2^\alpha M^\alpha q^2 -2^\alpha p^2
\\
&\ge&  4pq \alpha M^{\alpha-1} -2^\alpha M^\alpha q^2-2^\alpha p^2
\\
   &=&  M^{\alpha-2}(4p\alpha c -2^\alpha c^2) -2^\alpha p^2.
\end{eqnarray*}
Hence, whenever $c<2^{2-\alpha} \alpha$ and $M$ is large enough,
\[
  \E |X_M-Y_M|^\alpha - \E |X_M+Y_M|^\alpha  >0
\]
and (\ref{e2}) fails.
\bigskip

It is natural to ask for which class of functions $F(\cdot)$ inequality  (\ref{e2}) extends to
\begin{equation} \label{e3}
  \E \, F(|X-Y|) \leq \E \,  F(|X+Y|).
\end{equation}
Recall that a function $G:\R_+\to\R_+$ is called {\it Bernstein function}, if it admits a representation
\[
  G(\lambda)= a+b\lambda+\int_0^\infty \left(1-e^{-t\lambda} \right) \mu(dt),
\]
where  $a,b\ge 0$, and a measure $\mu$ on $(0,\infty)$ satisfies integrability condition
$\int_0^\infty \min(t,1)\mu(dt)<\infty$. 
In a recent monograph \cite{SSV} one can find an exhaustive information about the importance of Bernstein
functions in Analysis and Probability, their properties, as well as a large list of such functions.
\bigskip

\begin{theorem}{Theorem 2.} Let $G$ be a Bernstein function. Then $F(\lambda):=G(\lambda^2)$ satisfies 
$(\ref{e3})$ for any pair of i.i.d. random variables $X, Y$.
\end{theorem}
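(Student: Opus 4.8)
\medskip
\noindent\textbf{Proof plan.} The idea is to substitute the Bernstein representation of $G$ into $F(\lambda)=G(\lambda^2)$ and reduce the whole statement to one elementary inequality for Laplace transforms of squares. Writing $G(\lambda)=a+b\lambda+\int_0^\infty(1-e^{-t\lambda})\,\mu(dt)$, we have, for an arbitrary random variable $W$,
\[
   F(|W|)=G(W^2)=a+bW^2+\int_0^\infty\bigl(1-e^{-tW^2}\bigr)\,\mu(dt),
\]
and, every term on the right being nonnegative, Tonelli's theorem gives
\[
   \E\,F(|W|)=a+b\,\E\,W^2+\int_0^\infty\bigl(1-\E\,e^{-tW^2}\bigr)\,\mu(dt),
\]
an identity in $[0,\infty]$. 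Applying this with $W=X-Y$ and with $W=X+Y$, it suffices to compare the two right-hand sides term by term; since $\E\,e^{-tW^2}\le1$, all integrands are again nonnegative, so no subtraction of possibly infinite quantities is needed. Hence the theorem reduces to proving
\[
   \E\,(X-Y)^2\le\E\,(X+Y)^2
   \qquad\text{and}\qquad
   \E\,e^{-t(X-Y)^2}\ge\E\,e^{-t(X+Y)^2}\quad(t>0),
\]
both in $[0,\infty]$, the second giving $1-\E\,e^{-t(X-Y)^2}\le1-\E\,e^{-t(X+Y)^2}$ for every $t$.

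The substantive point is the Laplace-transform inequality, and for it the plan is to randomize by an independent standard normal variable $N$. From $\E\cos(\theta N)=e^{-\theta^2/2}$ we get $e^{-ts^2}=\E\cos(\sqrt{2t}\,Ns)$, hence (the integrand being bounded by $1$)
\[
   \E\,e^{-t(X\pm Y)^2}=\E_N\,\E_{X,Y}\cos\bigl(\sqrt{2t}\,N(X\pm Y)\bigr).
\]
Conditionally on $N$ the variables $X,Y$ stay i.i.d., so, writing $\theta:=\sqrt{2t}\,N$, the addition formula for the cosine and independence give
\[
   \E_{X,Y}\cos\bigl(\theta(X-Y)\bigr)=\bigl(\E_X\cos(\theta X)\bigr)^2+\bigl(\E_X\sin(\theta X)\bigr)^2,
\]
\[
   \E_{X,Y}\cos\bigl(\theta(X+Y)\bigr)=\bigl(\E_X\cos(\theta X)\bigr)^2-\bigl(\E_X\sin(\theta X)\bigr)^2.
\]
Subtracting and averaging over $N$,
\[
   \E\,e^{-t(X-Y)^2}-\E\,e^{-t(X+Y)^2}=2\,\E_N\Bigl[\bigl(\E_X\sin(\sqrt{2t}\,NX)\bigr)^2\Bigr]\ge0,
\]
which is exactly what is needed (and is strict unless $X$ has a symmetric distribution).

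Finally, $\E(X-Y)^2\le\E(X+Y)^2$ can be obtained from the Laplace-transform inequality by dividing by $t$ and letting $t\downarrow0$: since $(1-e^{-ts^2})/t=\int_0^{s^2}e^{-ut}\,du$ increases to $s^2$ as $t\downarrow0$, monotone convergence gives $t^{-1}\bigl(1-\E\,e^{-t(X-Y)^2}\bigr)\uparrow\E(X-Y)^2$, and likewise for $X+Y$, so the inequality survives the limit in $[0,\infty]$. (Alternatively, argue directly: when $\E X^2<\infty$ the difference equals $4(\E X)^2\ge0$, and when $\E X^2=\infty$ one checks $\E(X+Y)^2=\infty$ as well.) Inserting the two comparisons into the formula for $\E\,F(|W|)$ finishes the proof; the particular choice $G(\lambda)=\lambda^{\alpha/2}$ recovers Theorem~1. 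I expect the only slightly delicate point to be the bookkeeping in $[0,\infty]$ — checking that Tonelli applies and that the term-by-term comparison never forces a subtraction of infinities — while the Gaussian-randomization inequality is short once set up.
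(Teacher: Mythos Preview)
Your proof is correct and shares the paper's overall architecture: insert the Bernstein representation, use Tonelli, and reduce everything to the single inequality $\E\,e^{-t(X-Y)^2}\ge\E\,e^{-t(X+Y)^2}$. The genuine difference lies in how that key inequality is established. The paper expands directly: from $e^{-t(x-y)^2}-e^{-t(x+y)^2}=2e^{-tx^2-ty^2}\sinh(2txy)$ it reads off
\[
\E\,e^{-t(X-Y)^2}-\E\,e^{-t(X+Y)^2}
=2\sum_{n\ge0}\frac{(2t)^{2n+1}}{(2n+1)!}\,\bigl[\E(X^{2n+1}e^{-tX^2})\bigr]^2\ge0,
\]
a sum of squares via the $\sinh$ power series. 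You instead Gaussian-randomize, writing $e^{-ts^2}=\E_N\cos(\sqrt{2t}\,Ns)$ and using the cosine addition formula to exhibit the difference as $2\,\E_N\bigl[(\E_X\sin(\sqrt{2t}\,NX))^2\bigr]$. Both routes produce an explicit nonnegative representation of the gap; the paper's is a discrete sum of squares of weighted odd moments, yours an average of squared imaginary parts of the characteristic function. Your version has the mild advantage of connecting the inequality to the familiar identity $|\varphi_X(\theta)|^2=\varphi_{X-Y}(\theta)$, while the paper's is slightly more self-contained. You are also more scrupulous than the paper about the $b\lambda^2$ term and the $[0,\infty]$ bookkeeping; the paper simply asserts that checking $F_t(\lambda)=-e^{-t\lambda^2}$ suffices.
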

\medskip

\begin{proof} Since 
\[
   F(\lambda)= a+b\lambda^2+\int_0^\infty \left(1-e^{-t\lambda^2} \right) \mu(dt),
\]
it is enough to check (\ref{e3}) for elementary functions $F_t(\lambda):=-e^{-t\lambda^2}$. Indeed, from the
identity
\[
  e^{-t|x-y|^2}- e^{-t|x+y|^2} = 2e^{-tx^2-ty^2} \sum_{n=0}^\infty \frac{(2txy)^{2n+1}}{(2n+1)!}
\]
we infer that
\[
   \E \left[ F_t(|X+Y|)- F_t(|X-Y|)\right] =  2
   \sum_{n=0}^\infty \frac{(2t)^{2n+1}}{(2n+1)!} \left[\E \left(X^{2n+1} e^{-tX^2}\right) \right]^2 
   \ge 0.
 \] 
\end{proof}
\medskip

Theorem 1 is a particular case of Theorem 2 because $G(\lambda)=\lambda^{\alpha}$ are Bernstein functions whenever 
$0<\alpha<1$. 
\medskip

{\bf Acknowledgement.}
The authors are grateful to H.Kempka, A. Koldobskii, W. Linde, and A. Nekvinda
for setting the problem and for valuable discussions.

The research was supported by the RFBR-DFG grant 09-01-91331, RFBR
grants 10-01-00154a, 10-01-00397a,  as well as by Federal Focused
Programme 2010-1.1-111-128-033.


\begin{thebibliography}{99}

\bibitem{BE2010} X.~Bardina, K.~Es-Sebaiy (2011), An extension of bifractional Brownian
motion, Commun. Stochast. Analysis, 5, 333--340; {\tt
arXiv:1002.3680}.

\bibitem{HV2003} C.~Houdr\'e, J.~Villa (2003), An example of infinite dimensional
quasi-helix, Contemporary Mathematics, vol. 336, Amer. Math. Soc.,
pp. 195--201.

\bibitem{XXX} H.~Kempka, A.~Nekvinda (2011), Private communication.

\bibitem{LN2009} P.~Lei, D.~Nualart (2009), A decomposition of the bifractional Brownian
motion and some applications, Statist. Probab. Letters, 79,
619--624; {\tt arXiv:0803.2227}.

\bibitem{SSV} R.~Schilling, R.~Song, Z.~Vondra\v{c}ek (2011), Bernstein Functions. 
Ser. Studies in Math., vol.37, de Gruyter. 
\end{thebibliography}
\end{document}